\documentclass[a4paper,reqno,12pt]{amsart}
\usepackage{amsmath}
\usepackage{amsfonts}
\usepackage{amssymb,mathrsfs}
\usepackage{graphicx,hyperref}
\usepackage{enumerate,xcolor}
\usepackage[margin=1in]{geometry}
\DeclareGraphicsExtensions{.eps}
\newtheorem{theorem}{Theorem}[section]

\newtheorem{proposition}[theorem]{Proposition}

\theoremstyle{definition}

\numberwithin{equation}{section}

\title[QQHO]{Quaternionic Quantum $q$-Oscillator and unbounded Subnormal Operators in Quantum Economics}
\author{T. Prasad}
\address{Department of Mathematics,  School of Language, Literature and Humanities, Nalanda University, Rajgir-803116, Bihar, India}
\email{prasadvalapil@gmail.com, prasad@nalandauniv.edu.in}
\author{K Krishnan}
\address{Department of Mathematics, University College (Affiliated to University of Kerala), Thiruvananthapuram, Kerala, India}
\email{krishnank@universitycollege.ac.in}
\author{E. Shine Lal}
\address{Department of Mathematics, University College (Affiliated to University of Kerala), Thiruvananthapuram, Kerala, India}
\email{shinelal@universitycollege.ac.in}
\begin{document}
	\maketitle
	\begin{abstract}
	In this note, we observe that the class of  unbounded subnormal operators on quaternionic Hilbert spaces is a  solution of quaternionic quantum harmonic  q-oscillator  whose complex case is addressed by Szefraniec in \cite{q-osci}. We also address the connection of unbounded subnormal operators on quaternionic Hilbert spaces,  quaternionic quantum harmonic q-oscillator  and   quaternionic quantum  economics  using model theory of unbounded subnormal operators.
	\end{abstract}
	\section{Introduction and Preliminaries}
	Complex Quantum Economics (CQE) inculcates Quantum Physics models in quantum Economics and treats asset prices as wave functions existing in a state of inertia until a transaction perturbs the system ~\cite{orrell2022,orrell}. CQE can consider one asset attribute at a time. This limitation forces multi-asset dependencies and cross-correlations to be modeled using separate, exogenous covariance matrices. Hence, in this paper we consider quaternionic quantum harmonic oscillators that can be introduced in quantum Economics, which can treat multi asset factors simultaneously.   The algebraic non-commutativity gives an ideal mathematical model for market characterestics, where the order of trades or assets affects the output of the market value. 
	
Birkhoff and von Neumann \cite{von N} introduced quaternionic skew fields $\mathbb{H}$ and has been extensively developed by Adler~\cite{adler}. The quaternionic division algebra is the span of  three distinct anti-commuting imaginary units $\{i, j, k\}$ satisfying,
	\[
	i^2 = j^2 = k^2 = -1, \quad
	ij = -ji = k, \quad jk = -kj = i, \quad ki = -ik = j.
	\]
	The unit sphere of imaginary quaternions is defined as:
	\[
	\mathbb{S} = \{ q \in \mathbb{H} \ : \ q^2 = -1 \}.
	\]
	For any $J \in \mathbb{S}$, we define the slice $\mathbb{C}_J:=\{q\in \mathbb{H}: q=x+Jy, x,y\in \mathbb{R}\}$ and is structurally isomorphic to the complex field $\mathbb{C}$. It is known that any quaternion $q \in \mathbb{H}$ can be uniquely written as $q = x + Jy$, for some $J\in \mathbb{S}$, where $x, y \in \mathbb{R}$ and $y \geq 0$ \cite{adler}.
	
	Let $\Omega \subseteq \mathbb{H}$ be open. A function $f: \Omega \to \mathbb{H}$ is called {left slice regular} if for every imaginary unit $J \in \mathbb{S}$, its restriction $f_J$ to the complex plane $\mathbb{C}_J$ satisfies:
	\[
	\frac{\partial}{\partial x} f_J(x + Jy) + J \frac{\partial}{\partial y} f_J(x + Iy) = 0.
	\]
	Equivalently, $f$ can be represented via its power series expansion:
	\[f(q) = \sum_{n=0}^{\infty} q^n a_n, \quad a_n \in \mathbb{H}.\]
	
	Since the quaternionic multiplication is non-commutative, the standard directional derivative does not preserve slice regularity. Hence, we define {slice derivative} ( $\partial_S$ derivative) of a slice regular function $f$ as,
	\[	\partial_S f(q) = \frac{\partial}{\partial x} f(x + Jy).\]
	See ~\cite{colombo2016, colombo2009, sabadini} for more details.
	It can be seen that if $f$ is left slice regular, then its slice derivative $\partial_S f$ is also left slice regular.
	
	Let $\mathcal{C(H)}$ denote the space of all densely defined closed  right-linear operators on a right quaternionic Hilbert space $\mathcal{H}$. For an operator $T \in \mathcal{C(H)}$ and a quaternionic scalar $q = x + Jy$, the classical eigenvalue equation $T v = v q$ does not holds. Instead, we use the {$S$-spectrum}~\cite{colombo2009} which is defined as follows:
	
	For $T \in \mathcal{B}(V)$, we define the quaternionic operator $Q_q(T)$ associated with $q \in \mathbb{H}$ as:
	\[	Q_q(T) = T^2 - 2xT + |q|^2 {I},\]
	where ${I}$ is the identity operator and $x = \text{Re}(q)$. The {$S$-resolvent set} $\rho_S(T)$ is defined as:
	\[	\rho_S(T) = \{ q \in \mathbb{H} \ : \ Q_q(T) \text{ is invertible in } \mathcal{B(H)} \}.\]
	The {$S$-spectrum} $\sigma_S(T)$ is the complement of the $S$-resolvent set in $\mathbb{H}$:
	\[	\sigma_S(T) = \mathbb{H} \setminus \rho_S(T).\]
	The $S$-spectrum satisfies the {axially symmetric} property, that is, if a point $q = x + Jy$ belongs to $\sigma_S(T)$, then the entire $[q]$ is properly contained in $\sigma_S(T)$ where $[q]$ is the $2$-sphere defined by $x + \mathbb{S}y$.
	
		Let $S$ be a densely defined, right linear operator on $\mathcal{H}$. The {quaternionic adjoint} $S^*$ is defined on the domain 
		\[
		\mathcal{D}(S^*) = \{ g \in \mathcal{H}_{\mathbb{H}} : \exists \, g^* \in \mathcal{H}_{\mathbb{H}} \text{ such that } \langle Sf, g \rangle = \langle f, g^* \rangle \, \forall f \in \mathcal{D}(S) \}.
		\]
		For such $g$, we uniquely set $S^*g = g^*$ \cite{colombo2016}.	A densely defined right linear operator $N$ on a right quaternionic Hilbert space $\mathcal{K}$ is {normal} if $N$ is closed and satisfies $N^*N = NN^*$. It guarantees that $\mathcal{D}(N) = \mathcal{D}(N^*)$ and $\|Nf\| = \|N^*f\|$ for all $f \in \mathcal{D}(N)$ \cite{colombo2009}.
		
			The class of complex subnormal operators was introduced by Halmos\cite{halmos1950} to fill the inevitable gap between classes of normal operators and hyponormal operators. Many authors including Bram, Conway, et. al. extensively studied subnormal operators on complex Hilbert spaces \cite{bram1955,subnormal,halmos1950}. Curto and Prasad introduced  sub-$n$-normal and $n$-subnormal classes of operators which are the extensions of subnormal operators in \cite{curto}.
		
		An operator $S\in \mathcal{C(H)}$ is said to be subnormal if  there exist a normal operator  $N\in \mathcal{C(K)}$ where $\mathcal{H} \subseteq \mathcal{K}$ such that $N|_{D(S)}x=Sx$. We invoke the concept of tightness in quaternionic settings by a similar manner in complex Hilbert space settings considered by Szefraniec in \cite{q-osci}. Let $S$ be a right linear subnormal operator on $\mathcal{H}$ and let $N$ acting on $\mathcal{K} \supseteq \mathcal{H}$ be a normal extension of $S$. The extension $N$ is said to be
		\begin{enumerate}
			\item tight if $\mathcal{D}(N) \cap \mathcal{H}= \mathcal{D}(\overline{S})$,
			\item $\ast$-tight if $\mathcal{D}(N^*) \cap \mathcal{H} = \mathcal{D}(S^*)$.
		\end{enumerate}	
		where $\overline{S}$ denotes the closure of $S$.
		
	A densely defined right linear subnormal operator $S$ on $\mathcal{H}$ has a normal extension which is both tight and $\ast$-tight if and only if
		\begin{equation*}
			\mathcal{D}(\overline{S}) = \mathcal{D}(S^*).
		\end{equation*}
		Let $(\Omega,\mu)$ be a measure space where $\Omega \subset \mathbb{H}$ be an axially symmetric quaternionic domain.
			\[L^2(\Omega,\mu,\mathbb{H})=\{f:\Omega \to \mathbb{H},\int\limits_\Omega |f(q)|^2 d\mu(q) < \infty\}\]  denotes the collection of all square integrable quaternionic valued functions on $\Omega$. See \cite{ST for normal}. $L^2(\Omega,\mu,\mathbb{H})$ defines a right quaternionic Hilbert space equipped with the inner product
		\[\langle f,g\rangle= \int\limits_\Omega \overline{f(q)}g(q)d\mu(q)\]
		and $H^2(\Omega,\mu,\mathbb{H})$ denotes the quaternionic Hardy-Hilbert space defined by
		\[H^2(\Omega,\mu,\mathbb{H})=\left\{\sum\limits_{n=0}^\infty q^n a_n: \|\sum\limits_{n=0}^\infty q^n a_n\|_{H^2(\Omega,\mu,\mathbb{H})} = \sum\limits_{n=0}^\infty |a_n|^2 < \infty \right\}.\] 
		
		Let $\phi \colon \Omega \to \mathbb{C}$ be a measurable function. We define the operator $M_\phi \colon \mathcal{D}(M_\phi) \subseteq L^2(\Omega,\mu, \mathbb{H}) \to L^2(\Omega,\mu, \mathbb{H})$ by
		\[
		M_\phi(f)(q) = \phi(q) \cdot f(q), \quad \text{for all } f \in \mathcal{D}(M_\phi),
		\]
		where
		\[
		\mathcal{D}(M_\phi) = \left\{ f \in L^2(\Omega,\mu, \mathbb{H}) \colon \phi \cdot f \in L^2(\Omega,\mu, \mathbb{H}) \right\}.
		\]
		
		For more details, see \cite{ST for normal,Viswanath}.	Note that the polynomial multiplication operator $M_p(x)$ can be considered as $M_p(x)=\sum q^n(x)\cdot a_n.$
		
		Let $\Psi$ be a quaternionic wave function. Then \[\Psi=\Psi_0+J \Psi_1\] where $J\in \mathbb{S}$ and $\Psi_0$ and $\Psi_1$ are left slice regular functions. The quaternionic Hamiltonian operator, $\hat{H}$ is given by the equation,
	\[\hat{H}=\dfrac{\hbar^2}{2m}\dfrac{\partial^2}{\partial x^2}+\dfrac{1}{2}K\hat{x}^2+UI\]
	where $K$ and $U$ are quaternionic constants \cite{giardino2026}. The corresponding annihilation and creation operators are respectively given by,
	\begin{align*}
		\hat{a}&=\sum\limits_{z=i,j,k} \sqrt{\frac{m\omega}{2\hbar}}\left(\hat{x}_z+\frac{z}{m\omega}\hat{p}_z\right)\\
		\hat{a}^\dagger&=\sum\limits_{z=i,j,k} \sqrt{\frac{m\omega}{2\hbar}}\left(\hat{x}_z-\frac{z}{m\omega}\hat{p}_z\right)
	\end{align*}
	where $\hat{x}=\sum\limits_{z=i,j,k}z\hat{x}_z $ and $\hat{p}=\sum\limits_{z=i,j,k}z\hat{p}_z $. Also we have,
	\[[\hat{a},\hat{a}^\dagger]=\hat{a}\hat{a}^\dagger-\hat{a}^\dagger\hat{a}=1+V,\]
	where $V$ is a quaternionic term called cross terms \cite{giardino2026}. Hence the usual canonical commutation relation doesnot holds. therefore, it is necessary that we have to restrict the quaternionic equation to slices. 
	
	Assume that we restrict to the complex plane $\mathbb{C}_J$. Then $\hat{x}_J$ is the position operator and corresponding momentum operator is given by,
	\[\hat{p}_J=-J \hbar \partial_S\] where $\partial_S$ denotes the slice derivative. So the respective slice Hamiltonian operator is given by,
	\[\dfrac{\hat{p}_J^2}{2m}+\frac{1}{2}m \omega^2 \hat{x}_J^2.\]
	The corresponding annihilation operator $\hat{a}_J$ and creation operator $\hat{a}_J^\dagger$ satisfies the canonical commutation relation.
	 
	 In this paper, we deal with unbounded quaternionic subnormal operators and model theorem for this class. Also, quaternionic quantum $q$-oscillators are discussed and we prove that unbounded quaternionic subnormal operators belongs to the solution space of quantum $q$-oscillator. We inculcate quaternionic quantum $q$-oscillator into quantum Economics and observe that  quaternionic Hamiltonian operators are can be identified with  multiplication operators on quaternionic Hardy spaces.
		
	\section{Unbounded quaternionic subnormal operators and Quaternionic quantum $q$-Oscillator}
	In \cite{sqho}, authors proved that bounded quaternionic cyclic subnormal operators are unitarily equivalent to multiplication operators on quaternionic Hardy space. In this section, we consider a model theory for unbounded quaternionic subnormal operators.  Also, we discuss quaternionic $q$-oscillator and show that unbounded quaternionic subnormal operators belongs to the solution space of quaternionic $q$-oscillators. Finally, we inculcate quaternionic quantum $q$-oscillator and unbounded quaternionic subnormal operators into quantum Economics and also  connect quaternionic Hamiltonian operators and  multiplication operators on quaternionic Hardy spaces which are unitarially equivalent to quartenonic unbounded  subnormal operator in Economics.
		
	\subsection{Unbounded subnormal operators}
	The following result by Ramesh and Kumar in \cite{ST for normal} is used in the sequel.
	\begin{proposition}\cite{ST for normal}
		Let $N \in \mathcal{C(H)}$ be normal. Then there exist
		\begin{enumerate}
		\item a Hilbert basis $B$ of $\mathcal{H}$,
		\item a measure space $(\Omega, \nu)$,
		\item a unitary operator $V : \mathbb{H}\to L^2(\Omega,\mathcal{H},\nu)$, and
		\item a $\nu$-measurable function $\eta : \Omega \to \mathbb{C}$
		\end{enumerate}
		so that if $N$ is expressed with respect to $B$, then
		$Nx = U^*M_\eta Ux$, for all $x \in \mathcal{D}(N)$, where $M_\eta$ is quaternionic multiplication operator on $L^2(\Omega, \mathbb{H}\nu)$ induced by $\eta$, with the domain
	\[\mathcal{D}(M_\eta)= \{g \in L^2(\Omega, \mathbb{H}\nu): \eta \cdot g \in L^2(\Omega, \mathbb{H},\nu) \}.\]	
	\end{proposition}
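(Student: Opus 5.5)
The plan is to reduce the quaternionic statement to the complex spectral theorem for unbounded normal operators by passing to a slice. Fix an imaginary unit $J\in\mathbb{S}$ and view $\mathcal{H}$ as a complex Hilbert space $\mathcal{H}_J$ over $\mathbb{C}_J$ by restricting right scalar multiplication to $\mathbb{C}_J$. Choose $K\in\mathbb{S}$ orthogonal to $J$, so that $\mathbb{H}=\mathbb{C}_J\oplus K\mathbb{C}_J$. Right-linearity of $N$ makes it $\mathbb{C}_J$-linear. Closedness and density of the domain are unaffected. The quaternionic adjoint coincides with the $\mathbb{C}_J$-adjoint, because the $\mathbb{C}_J$-valued inner product is the $\mathbb{C}_J$-component of the quaternionic one. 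Hence $N$ is a normal operator on $\mathcal{H}_J$ in the complex sense.

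The first step is to introduce the anti-unitary conjugation determined by a Hilbert basis. Take a Hilbert basis $B=\{e_\alpha\}$ of $\mathcal{H}$, which exists by Zorn's lemma. Expressing $N$ with respect to $B$ means using the left multiplication $x\mapsto \sum_\alpha e_\alpha\, q\,\langle e_\alpha,x\rangle$ that $B$ induces. Following the approach of Ghiloni–Moretti–Perotti, one shows there is an anti-self-adjoint unitary $\mathcal{J}$ commuting with $N$ and $N^*$ with $\mathcal{J}^2=-I$. In the basis picture $\mathcal{J}$ is left multiplication by $J$ relative to $B$. The hardest step is to construct $\mathcal{J}$ so that it genuinely commutes with $N$, and this is where I expect the main obstacle. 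My first attempt would be the polar decomposition $N=W|N|$: the skew part of $N$ (via $(N-N^*)/2$ and its polar decomposition, extended by a left multiplication on its kernel) produces $\mathcal{J}$ commuting with $N$. I would then verify that the eigenspace $\mathcal{H}_J^+=\{x:\mathcal{J}x=xJ\}$ is a $\mathbb{C}_J$-Hilbert space reducing $N$.

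The next step is to apply the complex spectral theorem on $\mathcal{H}_J^+$. The restriction $N_+:=N|_{\mathcal{H}_J^+}$ is a complex normal operator. The multiplication-operator form of the spectral theorem gives a measure space $(\Omega,\nu)$, a unitary $U_+:\mathcal{H}_J^+\to L^2(\Omega,\mathbb{C}_J,\nu)$ and a measurable $\eta:\Omega\to\mathbb{C}_J\cong\mathbb{C}$ such that $N_+=U_+^*M_\eta U_+$. The domains match: $x\in\mathcal{D}(N_+)$ exactly when $\eta\cdot U_+x\in L^2$.

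Finally, extend quaternionically. Every $x\in\mathcal{H}$ decomposes uniquely as $x=x_1+x_2K$ with $x_1,x_2\in\mathcal{H}_J^+$, using that $\mathcal{H}=\mathcal{H}_J^+\oplus\mathcal{H}_J^+K$. Define $Ux:=U_+x_1+(U_+x_2)K$, which is a right-linear unitary onto $L^2(\Omega,\mathbb{H},\nu)=L^2(\Omega,\mathbb{C}_J,\nu)\oplus L^2(\Omega,\mathbb{C}_J,\nu)K$. Because $N$ commutes with right multiplication by $K$, we get $Nx=N_+x_1+(N_+x_2)K$, and hence $Nx=U^*M_\eta Ux$. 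In this computation $\eta$ acts on the left and $K$ on the right, so no commutation issue arises. The same decomposition gives $\mathcal{D}(N)=U^*\mathcal{D}(M_\eta)$, which completes the proof.
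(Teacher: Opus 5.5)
The paper gives no proof of this proposition; it quotes it from Ramesh--Santhosh Kumar. Your architecture is the standard associated-complex-space route: an anti-self-adjoint unitary $\mathcal{J}$ commuting with $N$, the slice $\mathcal{H}_J^+=\{x:\mathcal{J}x=xJ\}$, the complex multiplication-form theorem for $N_+$, and the lift through $\mathcal{H}=\mathcal{H}_J^+\oplus\mathcal{H}_J^+K$. Your last two steps are correct: $U$ is right linear and unitary, and $\mathcal{D}(N)=U^*\mathcal{D}(M_\eta)$. One small point: $\mathcal{J}$ equals left multiplication by $J$ only for a Hilbert basis taken inside $\mathcal{H}_J^+$, so $B$ must be chosen after $\mathcal{J}$, not before.

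The genuine gap is the existence of $\mathcal{J}$. This is the whole quaternionic content of the statement, and the construction you sketch does not produce it. First, on $\ker(N-N^*)$, where $N$ acts as a self-adjoint $A_0$, left multiplication by $J$ relative to an arbitrary Hilbert basis commutes with $A_0$ only if every entry $\langle e_\alpha,A_0e_\beta\rangle$ lies in $\mathbb{C}_J$. For example, on $\mathbb{H}^2$ the self-adjoint matrix $\bigl(\begin{smallmatrix}0&j\\-j&0\end{smallmatrix}\bigr)$ does not commute with entrywise left multiplication by $i$. Second, for unbounded $N$ you would need the closability of $N-N^*$ on $\mathcal{D}(N)$, the anti-self-adjointness of its closure, its polar decomposition, and the commutation of the phase with $N$. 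These are normally consequences of the spectral theorem you are trying to prove.

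You can close the gap with the observation in your first paragraph, which you never use afterwards. Since $N$ is normal on $\mathcal{H}_J$, it has a $\mathbb{C}_J$-spectral measure $E$. The map $R_K\colon x\mapsto xK$ is an antiunitary of $\mathcal{H}_J$ commuting with $N$, so uniqueness of $E$ gives $R_KE(\Delta)R_K^{-1}=E(\overline{\Delta})$.

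Now put $\mathcal{J}x=xJ$ on $E(\{\operatorname{Im}\lambda>0\})\mathcal{H}$, and $\mathcal{J}x=-xJ$ on $E(\{\operatorname{Im}\lambda<0\})\mathcal{H}=E(\{\operatorname{Im}\lambda>0\})\mathcal{H}K$. This is a right $\mathbb{H}$-linear unitary with $\mathcal{J}^2=-I$ that commutes with every $E(\Delta)$.

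On $E(\mathbb{R})\mathcal{H}$, the projections $E(\Delta)$ with $\Delta\subseteq\mathbb{R}$ are $\mathbb{H}$-linear (because $\overline{\Delta}=\Delta$), and the coefficients $\langle x,E(\Delta)x\rangle$ are real. Hence on each quaternionic cyclic subspace $\overline{\mathrm{span}}\{E(\Delta)x\,q\}$ the map $\sum_kE(\Delta_k)x\,q_k\mapsto\sum_kE(\Delta_k)x\,Jq_k$ is a well-defined isometry, since the Gram matrix is real. A Zorn decomposition of $E(\mathbb{R})\mathcal{H}$ into orthogonal cyclic subspaces then completes $\mathcal{J}$.

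Alternatively, apply the bounded Ghiloni--Moretti--Perotti decomposition to the bounded transform $N(I+N^*N)^{-1/2}$ and transfer the commutation back to $N$. Either way, once $\mathcal{J}$ is in hand the rest of your argument goes through unchanged.
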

	Using the above proposition and proof techniques in \cite{qho}, we give a model theorem for unbounded quaternionic subnormal operators.
		\begin{theorem}\label{M_z}
		Let $S \in \mathcal{C}(\mathcal{H})$ be a densely defined cyclic subnormal operator on a right quaternionic Hilbert space $\mathcal{H}$. Then there exists a Borel measure $\mu$ on the quaternionic domain $\mathbb{H}$ such that 
		\[S\psi = V^*M_\eta V \psi \qquad \text{ for every } \psi \in \mathcal{D}(S)\subseteq \mathcal{H}\] 
		where $M_\eta$ is the multiplication operator  acting on $H^2(\Omega,\mu, \mathbb{H})$, where ${H}^2(\Omega,\mu, \mathbb{H})$ denotes the closed right linear span of polynomials on $L^2(\Omega,\mu, \mathbb{H})$	with the domain
		\[\mathcal{D}(M_\eta)= \{f(\eta) \in H^2(\Omega, \mathbb{H},\nu): \eta \cdot f(\eta) \in H^2(\Omega, \mathbb{H},\nu) \}.\]
	\end{theorem}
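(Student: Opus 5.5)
We mimic the classical model theorem for cyclic subnormal operators, using the quaternionic spectral theorem (the Proposition of Ramesh and Kumar above) in place of the complex one. Throughout, ``cyclic'' means that there is a vector $\psi_0\in\mathcal{D}^\infty(S)=\bigcap_n\mathcal{D}(S^n)$ such that the right linear span $\mathcal{P}=\{\sum_{n} S^n\psi_0\, a_n : a_n\in\mathbb{H}\}$ is a core for $S$ and is dense in $\mathcal{H}$.

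\textbf{Step 1: transport the normal extension.} Let $N\in\mathcal{C}(\mathcal{K})$ be a normal extension of $S$ with $\mathcal{H}\subseteq\mathcal{K}$. We apply the Proposition to $N$ and obtain a measure space $(\Omega,\nu)$, a $\nu$-measurable function $\eta:\Omega\to\mathbb{C}$ and a unitary $U:\mathcal{K}\to L^2(\Omega,\mathbb{H},\nu)$ with $N=U^*M_\eta U$ on $\mathcal{D}(N)$. Since $\mathcal{D}(S)\subseteq\mathcal{D}(N)$ and $N$ extends $S$, every power $S^n\psi_0=N^n\psi_0$ lies in $\mathcal{D}(N^n)$. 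Hence $\eta^n g_0\in L^2(\nu)$ for all $n$, where $g_0:=U\psi_0$. It follows that
\[
\textstyle U\big(\sum_n S^n\psi_0 a_n\big)=\sum_n \eta^n g_0\, a_n .
\]
Here we use that $\eta$ is complex valued, so multiplication by $\eta$ acts on the left and right linearity is preserved.

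\textbf{Step 2: push forward to a measure on $\mathbb{H}$.} We define the Borel measure $\mu:=\eta_*(|g_0|^2\,d\nu)$ on $\mathbb{C}_J\subseteq\mathbb{H}$, and take $\Omega$ to be the axially symmetric hull of its support if needed. The change of variables formula then gives, for each polynomial $p(q)=\sum q^n a_n$,
\[
\Big\|\sum_n S^n\psi_0 a_n\Big\|^2=\int_\Omega\Big|\sum_n \eta^n g_0 a_n\Big|^2 d\nu=\int |p(q)|^2\,d\mu(q).
\]
The middle equality uses $|\eta^n g_0 a_n|$-type manipulations which are valid because $\eta$ commutes with real scalars and $|g_0 a|=|g_0||a|$ in $\mathbb{H}$. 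Expanding the square needs care: $\overline{g_0}\,\overline{\eta^m}\eta^n g_0$ is not $|g_0|^2\overline{\eta^m}\eta^n$ unless $g_0$ takes values in $\mathbb{C}_J$. This is the main obstacle. To handle it, I would first choose the Hilbert basis $B$ in the Proposition so that $g_0$ is $\mathbb{C}_J$-valued, that is, replace $\psi_0$ by a suitable unit multiple fibrewise, which does not change cyclicity. Failing that, I would work with the vector valued measure $d\langle E(\cdot)\psi_0,\psi_0\rangle$ coming from the spectral measure $E$ of $N$, which is real and positive; this is exactly the technique of \cite{qho}.

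\textbf{Step 3: the unitary $V$ and the intertwining.} We define $V_0:\mathcal{P}\to H^2(\Omega,\mu,\mathbb{H})$ by $V_0\big(\sum S^n\psi_0a_n\big)=\sum q^na_n$. By Step 2, $V_0$ is isometric, so it is well defined. It has dense range, because $H^2$ is by definition the closure of the polynomials, and its domain is dense by cyclicity. It therefore extends to a unitary $V:\mathcal{H}\to H^2(\Omega,\mu,\mathbb{H})$. On $\mathcal{P}$ we have $VS\big(\sum S^n\psi_0a_n\big)=\sum q^{n+1}a_n=M_\eta V\big(\sum S^n\psi_0a_n\big)$, where $\eta(q)=q$ is now the coordinate function. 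Since $\mathcal{P}$ is a core for $S$ and $M_\eta$ restricted to $H^2$ with the stated domain is closed, we pass to closures and obtain $S\psi=V^*M_\eta V\psi$ for all $\psi\in\mathcal{D}(S)$. The closedness holds because $M_\eta$ is closed on $L^2$ and $H^2$ is closed. The remaining check is the domain inclusion $V\mathcal{D}(S)\subseteq\mathcal{D}(M_\eta)$, which follows from the graph-norm limit argument.
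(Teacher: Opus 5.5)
Your outline matches the paper's: spectral theorem for a normal extension, a measure making $p(S)\psi_0\mapsto p$ isometric, unitary extension, intertwining. Your Step~3 handles the unbounded domains more carefully than the paper does.

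There is a genuine gap in Step~2, exactly where you suspect it, and neither of your repairs closes it. Because $g_0=U\psi_0$ is $\mathbb{H}$-valued, $\sum_n\eta^n g_0a_n$ is not $p(\eta)g_0$. The paper's proof simply asserts the resulting factorization $\int|p(\eta)|^2|(Ux)(\eta)|^2\,d\nu$, but it is false. On $\mathcal{H}=\mathbb{H}$ let $S\psi=i\psi$, so $\eta\equiv i$. The cyclic vectors $1$ and $j$ give the same $|g_0|^2\nu$ and the same scalar measure $\langle E(\cdot)\psi_0,\psi_0\rangle$. Yet for $p(q)=q-i$ one has $p(S)1=0$ while $p(S)j=ij-ji=2k$. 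So no measure built from $|g_0|^2\nu$ or from $\langle E(\cdot)\psi_0,\psi_0\rangle$ can work, and your fallback fails; \cite{qho} is purely complex and never meets this issue.

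Your first repair swaps $\psi_0$ for a different vector of $\mathcal{K}$, which need neither lie in $\mathcal{H}$ nor be cyclic. Take $N=M_z$ on $L^2(\mathbb{T},\mathbb{H})$ with $\mathbb{T}\subset\mathbb{C}_i$, and $\psi_0=1+\bar z j$. The generated subspace is $\mathcal{H}=\bar z H^2$, with $H^2$ the closed span of the $z^na$ (it contains $zg_0-g_0j=\bar z(1+z^2)$, and $1+z^2$ is outer). The real fibrewise unit multiple $|g_0|=\sqrt2$ lies in $\mathcal{H}$, but it generates only $H^2$, which does not contain $\psi_0$.

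The missing idea is a pointwise conjugation. On $\{g_0\neq0\}$ put $\tilde\eta:=g_0^{-1}\eta g_0$. This is a measurable $\mathbb{H}$-valued function on the same $2$-spheres as $\eta$. Then $\eta^ng_0=g_0\tilde\eta^{\,n}$, so $U\,p(S)\psi_0=g_0\,p(\tilde\eta)$ and
\[
\|p(S)\psi_0\|^2=\int|g_0|^2\,|p(\tilde\eta)|^2\,d\nu=\int_{\mathbb{H}}|p(q)|^2\,d\mu(q),\qquad \mu:=\tilde\eta_*\bigl(|g_0|^2\nu\bigr).
\]
This $\mu$ lives on $\mathbb{H}$ and in general on no single slice, which is why the theorem only promises a measure on $\mathbb{H}$. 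For $\psi_0=j$ above it is $\delta_{-i}$, not $\delta_i$. For $\psi_0=1+\bar zj$ it sits on a curve meeting infinitely many slices.

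Put differently, your first repair becomes correct if you rotate the representation instead of the vector. Composing $U$ with left multiplication by $|g_0|g_0^{-1}$ sends $\psi_0$ to $|g_0|$, at the price of replacing $\eta$ by $\tilde\eta$. With $M_\eta$ read as left multiplication by the coordinate $q$ on $H^2(\Omega,\mu,\mathbb{H})$ (right linear, and $q\,p(q)=\sum q^{n+1}a_n$), your Step~3 then goes through unchanged.
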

	
	\begin{proof}
		Let $x \in \mathcal{H}$ be a cyclic vector for $S$, and let $N \in \mathcal{C}(\mathcal{K})$ be the minimal normal extension of $S$. Hence,
	\[	\overline{span}  \{ S^n x : n \geq 0 \} = \mathcal{D}(S) \subseteq \mathcal{H} \quad \text{ and } \overline{span} \{ N^{*j} N^k x : j, k \geq 0 \}=\mathcal{D}(N)\subseteq \mathcal{K}.\]
	By above Proposition, there exists a measure space $(\Omega, \nu)$,
	a unitary operator $V : \mathbb{H}\to L^2(\Omega,\mathcal{H},\nu)$, and  a $\nu$-measurable function $\eta : \Omega \to \mathbb{C}$ such that 
	\[Nx = U^*M_\eta Ux, \qquad \text{ for all } x \in \mathcal{D}(N).\]
	
	\noindent We denote the space of all quaternionic polynomials by $\mathbb{H}[\eta]$. Let  $p\in \mathbb{H}[\eta]$, where $p=p(\eta)= \sum \eta^n a_n$ where $a_n \in \mathbb{H}$.  We denote $M_p(x)= \sum \eta^n\cdot x\  a_n$. Now, for cyclic vector $x \in \mathcal{D}(S)$, we have
	\begin{align*}
		\|p(S)x\|^2_{\mathcal{H}} &= \langle p(S)x, p(S)x \rangle_{\mathcal{H}}  \\
		&= \langle p(N)x, p(N)x \rangle_{\mathcal{K}} \\
		&= \langle \sum M_{\nu^k} U(x) a_k, \sum M_{\nu^k} U(x) a_k \rangle_{L^2(\nu, \mathbb{H})}  \\
		&= \langle M_p(Ux), M_p(Ux) \rangle_{L^2(\Omega,\nu, \mathbb{H})} \nonumber \\
		&= \int_{\mathbb{H}} |p(\eta)|^2 |(Ux)(\eta)|^2 \, d\nu(\eta).
	\end{align*}
	If we let, $d\mu(\eta) = |(Ux)(\eta)|^2 \, d\nu(\eta)$, $\|p(S)x\|^2_{\mathcal{H}} = \|p\|^2_{L^2(\omega,\mu, \mathbb{H})}$.
	
	Let $\mathcal{P}_x=\{p(S)(x): p \in \mathbb{H}[z]\}$ and if we define $V_0: \mathcal{P}_x \to L^2(\Omega, \mu, \mathbb{H})$  by,
	\[V_0(p(S)x) = p(\eta),\]
	$V_0$ is a well-defined, isometric right linear operator on $\mathcal{P}_x$. Since $\mathcal{P}_x$ is dense in $\mathcal{D}(S)$, $V_0$ can be extended to  $V: \mathcal{D}(S) \to L^2(\Omega,\mu, \mathbb{H})$. Clearly $V$ is a continuous unitary extension of $V_0$, the range of $V$ is the closure of the polynomials under the $L^2(\Omega, \mu, \mathbb{H})$ norm, which is  ${H}^2(\Omega, \mu, \mathbb{H})$. Now,
	\[
	V S (p(S)x) = V (S p(S) x) = \eta \cdot p(\eta) \text{ and } M_\eta V (p(S)x) = M_\eta p(\eta) = \eta \cdot p(\eta).
	\]
	Hence $VS\psi = M_\eta V \psi$ holds for every $\psi \in \mathcal{D}(S)$. Hence, $S\psi = V^*M_\eta V \psi$ for every $\psi \in \mathcal{D}(S)\subseteq \mathcal{H}$.
	\end{proof}
\subsection{Quaternionic $q$-Oscillator}
	The $q$-deformation of the quantum harmonic oscillator called the q-oscillator has a vital role in quantum Physics and quantum Economics. The $q$-oscillator algebra which is generated by three objects spanned by $A$, $B$ and unity of algebra, $1$ satisfying the commutation relation,
	\[AB-qBA=1.\]
	Then using the adjoints in Hilbert spaces, the operator based commutation relation,
	\begin{equation}\label{qqo}
	S^*S-qSS^*=I.
	 \tag{$\mathcal{O({\rm q},S,H)}$}
	\end{equation}
	The complex versions of the more interpretations of the above relation can be seen in \cite{q-osci}. The corresponding quaternionic versions can be easily verified. 
	
	 $(\mathcal{O({\rm q},S,H)})$ implies that
	\begin{gather*}
		S \text{ is closed, } \mathcal{D} \text{ is dense in } \mathcal{H} \text{ and} \\
		\mathcal{D} \subset \mathcal{D}(S^*\overline{S}) \cap \mathcal{D}(\overline{S}S^*), \ \ S^*Sf - qSS^*f = f, \ \ f \in \mathcal{D}. \tag{$\mathcal{O({\rm q},S,D)}$}
	\end{gather*}
	The other interpretation of  $(\mathcal{O({\rm q},S,H)})$ is,
	\begin{equation}
		\langle Sf, Sg \rangle - q\langle S^*f, S^*g \rangle = \langle f, g \rangle, \ \ f,g \in \mathcal{D}(S) \cap \mathcal{D}(S^*). \tag{$\mathcal{O({\rm q},S,{\rm w})}$}
	\end{equation}
Since $\mathcal{O({\rm q},S,{\rm w})}$ is equivalent to
	\begin{equation*}
		\|Sf\|^2 - q\|S^*f\|^2 = \|f\|^2, \ \ f \in \mathcal{D}(S) \cap \mathcal{D}(S^*),
	\end{equation*}
	Since $S$ is closed and holds $\mathcal{O({\rm q},S,{\rm w})}$, the above relation is equivalent to
	\begin{equation*}
		\langle \overline{S}f, \overline{S}g \rangle - q \langle S^*f, S^*g \rangle = \langle f, g \rangle, \quad f \in \mathcal{D}(\overline{S}) \cap \mathcal{D}(S^*).
	\end{equation*}
	Now we state some variations of $(\mathcal{O({\rm q},S,H)})$.
	\begin{enumerate}
		\item 
	 If $(\mathcal{O({\rm q},S,D)})$ holds with $\mathcal{D}$ being a core of $S$ then $(\mathcal{O({\rm q},S,{\rm w})})$ holds and $\mathcal{D}(\overline{S}) \subset \mathcal{D}(S^*)$. 
	 \item  If $(\mathcal{O({\rm q},S,D)})$ holds with $\mathcal{D}$ being a core of $S^* $ then $(\mathcal{O({\rm q},S,{\rm w})})$ holds and $\mathcal{D}(S^*) \subset \mathcal{D}(\overline{S})$. 
	
	\item 
	If $(\mathcal{O({\rm q},S,{\rm w})})$ holds, then $(\mathcal{O({\rm q},S,D)})$ holds with $\mathcal{D} = \mathcal{D}(S^*\overline{S}) \cap \mathcal{D}(\overline{S}S^*)$. 
	\item 
	If $(\mathcal{O({\rm q},S,{\rm w})})$ holds and $\mathcal{D}(\overline{S}) \cap \mathcal{D}(S^*)$ is a core of $S$ and $S^*$ then $\mathcal{D}(S^*\overline{S}) = \mathcal{D}(\overline{S}S^*)$.
	 \item 
	  If $(\mathcal{O({\rm q},S,{\rm w})})$ holds and $\mathcal{D}(\overline{S}) = \mathcal{D}(S^*)$ then $\overline{S}$ satisfies $(\mathcal{O({\rm q},S,D)})$ on $\mathcal{D} = \mathcal{D}(S^*\overline{S}) = \mathcal{D}(\overline{S}S^*)$.
	\end{enumerate}
	The above relations can be obtained using the proof techniques in \cite{q-osci}.
	
	For an integer $z$ and a real number $q$, we define $[z]_q= (1 - q^z)(1 - q)^{-1}$ if $q \neq 1$ and $[z]_1 = z$. If $z$ is a non-negative integer, then $[z]_q = 1 + q +q^2+ \cdots + q^{z-1}$ and this is usually called $q$-number \cite{q-osci}.	Suppose $q > 0$ and $S$ is a right linear weighted shift with respect to an orthonormal basis $(e_k)_{k=0}^\infty$ of the right quaternionic Hilbert space $\mathcal{H}$ with the real weights $(\sqrt{[k + 1]_q})_{k=0}^\infty$. If we define,
	\[S_0 = S, \quad S_n = q^{n/2}S, \quad D_n = \sqrt{[n]_q} \, \text{diag}(q^{k/2})_{k=0}^\infty, \quad n = 1, 2 \dots, \]
	the matrix operator given by
	\begin{equation}\label{eq:matrix}
	N=	\begin{pmatrix}
			S_0    & D_1    & 0      & 0      & \dots  \\
			0      & S_1    & D_2    & 0      & \dots \\
			0      & 0      & S_2    & D_3    & \dots \\
			\vdots & \vdots & \vdots & \vdots & \ddots 
		\end{pmatrix}
	\end{equation}
	defines a right linear operator in the quaternionic direct sum space $\bigoplus\limits_{n=0}^\infty \mathcal{H}_n$ with $\mathcal{H}_n = \mathcal{H}$, whose domain is composed of all those algebraic direct sums $\bigoplus\limits_{n=0}^\infty f_n$ for which $f_n = 0$ except for a finite number of indices $n$. Here using definition of adjoint,
	\[		\mathcal{D}(N^*) = \bigoplus\limits_{n=0}^\infty \mathcal{D}(S_n^*).\]
 Now, we have the norm equality $\|Nf\| = \|N^*f\|$ satisfies on $\mathcal{D}(N)$, so that $N$ is normal. Consequently, $
		S$  is subnormal and $\overline{N}$  is its tight and $\ast$-tight normal extension.
		\begin{theorem}[Quaternionic $q$-Oscillator Characterization]
		Let $\mathcal{H}$ be a seperable right quaternionic Hilbert space and $S$ be a densely defined closed right linear operator on $\mathcal{H}$ with right orthonormal basis $\{e_n\}_{n=0}^\infty \subset \mathcal{D}(\overline{S})$ and  For a real parameter $q > 0$, 
			\[
			\overline{S}e_n = \sqrt{[n+1]_q} \, e_{n+1}, \quad n=0,1,2,\dots
			\]
			if and only if  $S$ is irreducible, satisfies $(\mathcal{O({\rm q},S,H)})$ with a domain $\mathcal{D}$ that is a core for $S$ and invariant under both $S$ and $S^*$, and $S$ is subnormal possessing a tight and $\ast$-tight normal extension.
	\end{theorem}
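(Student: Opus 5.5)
We prove the two implications separately, following Szafraniec's complex argument in \cite{q-osci}. Because the weights $\sqrt{[k+1]_q}$ are real, the operator $S$ sends the right $\mathbb{H}$-span of the $e_n$ into itself. So on the linear span of the basis the computations reduce to real scalar identities.

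\emph{Forward direction.} Take $\mathcal{D}=\operatorname{lin}\{e_n\}$, the finite right linear combinations of the basis vectors. It is a core for $S$ because $S$ is the closure of its restriction to the basis span. Computing the adjoint against basis vectors gives $S^*e_{n+1}=\sqrt{[n+1]_q}\,e_n$ and $S^*e_0=0$, so $\mathcal{D}$ is invariant under both $S$ and $S^*$. On basis vectors,
\[
S^*Se_n-qSS^*e_n=\bigl([n+1]_q-q[n]_q\bigr)e_n=e_n,
\]
using the identity $[n+1]_q=1+q[n]_q$. By right linearity this gives $(\mathcal{O({\rm q},S,D)})$ on $\mathcal{D}$. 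Variation (1) listed after $(\mathcal{O({\rm q},S,H)})$ then yields $(\mathcal{O({\rm q},S,{\rm w})})$.

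For irreducibility, let $P$ be an orthogonal projection that reduces $S$. Then $P$ commutes with $S^*S$, which is diagonal on $\{e_n\}$ with eigenvalues $[n+1]_q$. For $q>0$ these eigenvalues are distinct, since $n\mapsto[n+1]_q$ is strictly increasing. Hence $P$ preserves each one-dimensional right subspace $e_n\mathbb{H}$. Because $P$ commutes with the shift, $P$ is determined by its action on $e_0$. Since $P$ is right linear and idempotent, it must be $0$ or $I$.

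Subnormality with a tight and $\ast$-tight extension comes from the block matrix $N$ in \eqref{eq:matrix}. I would verify normality by checking $\|Nf\|=\|N^*f\|$ on finitely supported vectors; the identity needed is $S_n^*S_n+D_{n+1}^2=S_nS_n^*+D_n^2$, which reduces to real scalar identities among $q$-numbers on each $e_k$. One then passes to the closure $\overline N$. Tightness, $\mathcal{D}(\overline N)\cap\mathcal{H}_0=\mathcal{D}(S)$, follows from comparing graph norms. For $\ast$-tightness the key fact is $\mathcal{D}(S)=\mathcal{D}(S^*)$. This holds because $\|Sf\|^2=\|f\|^2+q\|S^*f\|^2$ on $\mathcal{D}$ makes the two graph norms equivalent there, and $\mathcal{D}$ is a core for both $S$ and $S^*$. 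The criterion stated before the theorem then applies.

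\emph{Converse direction.} Assume $S$ is irreducible and satisfies $(\mathcal{O({\rm q},S,D)})$ on an invariant core $\mathcal{D}$, and that $S$ has a tight and $\ast$-tight normal extension, so that $\mathcal{D}(\overline S)=\mathcal{D}(S^*)$. The plan is to build a vacuum vector.
\begin{enumerate}
\item Show that $\ker S^*\neq\{0\}$. Restrict to a slice $\mathbb{C}_J$, where $\mathcal{H}$ becomes a complex Hilbert space on which the restricted operator is complex linear and still satisfies the relation. Then invoke the complex result of \cite{q-osci}: subnormality with a tight and $\ast$-tight extension excludes the pathological (non-Fock) representations of the $q$-commutation relation. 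Alternatively, one can use the Wold-type decomposition of $S^*$ induced by the relation.
\item Take a unit vector $e_0\in\ker S^*$ and set $e_n=\bigl([n]_q!\bigr)^{-1/2}S^ne_0$. The relation together with induction gives $S^*S^ne_0=[n]_qS^{n-1}e_0$, and hence orthonormality of the $e_n$. Real weights make this compatible with right multiplication.
\item The closed right span of $\{e_n\}$ reduces $S$. By irreducibility it equals $\mathcal{H}$, so $\{e_n\}$ is a basis and $\overline Se_n=\sqrt{[n+1]_q}\,e_{n+1}$.
\end{enumerate}

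The main obstacle is step 1, the existence of the vacuum vector. Its proof must genuinely use the tight and $\ast$-tight normal extension; this is the hard part of Szafraniec's theorem. Transferring it to the quaternionic setting requires checking that passing to a slice preserves closedness, adjoints, cores and normal extensions. Most of the work is this transfer via the slice decomposition $\mathcal{H}=\mathcal{H}_J^+\oplus\mathcal{H}_J^-$, since a right linear operator need not leave these components invariant. If the transfer fails, the fallback is to work directly with the $\mathbb{C}_J$-linear structure induced by a left multiplication commuting with $S$.
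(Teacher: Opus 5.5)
Your outline is the same as the paper's. One direction uses $\mathcal{D}=\operatorname{lin}\{e_n\}$, the block matrix $N$ and irreducibility of the shift. The other uses a unit vector $e_0\in\ker S^*$, the vectors $([n]_q!)^{-1/2}S^ne_0$, and irreducibility for completeness. The genuine gap is your step 1. It carries the whole converse, and you only plan it. The paper's proof does not supply it either: it just asserts that $\ker S^*=\{0\}$, irreducibility and the norm identity contradict subnormality.

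Your plan has two concrete defects.
\begin{enumerate}
\item The transfer you worry about is not the issue. Restricting scalars to $\mathbb{C}_J$ (acting by right multiplication) makes every right linear operator $\mathbb{C}_J$-linear. The $\mathbb{C}_J$-component of $\langle\cdot,\cdot\rangle$ determines the whole inner product, so closedness, adjoints, cores and normal extensions pass over unchanged, and $\mathcal{H}_J^{\pm}$ is not needed. What is lost is irreducibility. Over $\mathbb{C}_J$ the quaternionic shift itself is the orthogonal sum of two copies of the complex shift, on the closed $\mathbb{C}_J$-spans of $\{e_n\}$ and of $\{e_nK\}$, with $K\in\mathbb{S}$ anticommuting with $J$. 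So an irreducible complex theorem cannot simply be quoted.
\item More seriously, for $0<q<1$ your claim that a tight and $\ast$-tight normal extension excludes non-Fock solutions is false. Take $S=(1-q)^{-1/2}U$ with $U$ unitary: it satisfies $S^*S-qSS^*=I$ on all of $\mathcal{H}$, is normal, and has $\ker S^*=\{0\}$. Such solutions can only be excluded by irreducibility together with $\dim\mathcal{H}=\infty$. Indeed, on $\mathcal{H}=\mathbb{H}$ the operator $(1-q)^{-1/2}I$ satisfies every condition on the right-hand side. Your step 1 uses neither ingredient.
\end{enumerate}

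The step can be done directly in $\mathcal{H}$. By closure, the relation on the core $\mathcal{D}$ gives $\|\overline Sf\|^2=\|f\|^2+q\|S^*f\|^2$ on $\mathcal{D}(\overline S)=\mathcal{D}(S^*)$. Suppose $\ker S^*=\{0\}$. Then $\overline S$ is bounded below by $1$, so it has closed range $(\ker S^*)^\perp=\mathcal{H}$. Hence it is bijective and $(S^*)^{-1}=(\overline S^{-1})^*$. Therefore $m:=\inf_{\|f\|=1}\|\overline Sf\|^2=\|\overline S^{-1}\|^{-2}=\inf_{\|f\|=1}\|S^*f\|^2$, and the identity yields $m=1+qm$ with $m\ge 1$. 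This is impossible for $q\ge1$. For $0<q<1$ it forces $m=(1-q)^{-1}$. Hyponormality, $\|S^*f\|\le\|\overline Sf\|$, follows from subnormality and turns the identity into $\|\overline Sf\|^2\le(1-q)^{-1}\|f\|^2$. Thus $(1-q)^{1/2}\overline S$ is unitary, so $\overline S$ is an irreducible normal operator. That forces $\dim\mathcal{H}=1$, a contradiction.

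There are also two smaller points in the forward direction. First, for the upper bidiagonal $N$ of \eqref{eq:matrix} the diagonal identity is $S_n^*S_n+D_n^2=S_nS_n^*+D_{n+1}^2$. Yours has $D_n$ and $D_{n+1}$ interchanged and is false, since $S_n^*S_n-S_nS_n^*=q^n\operatorname{diag}(q^k)=D_{n+1}^2-D_n^2$. The off-diagonal entries also require $D_{n+1}S_n=S_{n+1}D_{n+1}$, which you did not state. Second, $\|Nf\|=\|N^*f\|$ on finitely supported vectors only makes $N$ formally normal. Normality of $\overline N$ needs a further argument, which the paper omits as well.
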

	
	\begin{proof}
		Assume $\overline{S}$ be defined as above. Let $\mathcal{D}$ be the right linear span of $\{e_n\}_{n=0}^\infty$. Since the weights $\sqrt{[n+1]_q}$ are real, $S(\mathcal{D}) \subset \mathcal{D}$ and $S^*(\mathcal{D}) \subset \mathcal{D}$ so that $\mathcal{D}$ is a reducing subspace. Using the entrywise matrix normal extension established for quaternionic direct sums, $S$ is subnormal and admits a tight and $\ast$-tight normal extension $\overline{N}$. Since $S$ is a unilateral right linear weighted shift with non-zero real weights, any closed right linear subspace reducing $S$ is either $\{0\}$ or $\mathcal{H}$. So $S$ is irreducible.
		
		Conversely if $S$ has a tight and $\ast$-tight normal extension, $\mathcal{D}(\overline{S}) = \mathcal{D}(S^*)$. Since $\mathcal{D}$ is given to be a core for $S$, the relation $\|S^*f\|^2 = \frac{1}{q}(\|Sf\|^2 - \|f\|_2^2)$ ensures that any sequence in $\mathcal{D}$ converges simultaneously in the graph norm of $S^*$. Hence, $\mathcal{D}$ is a core for both $S$ and $S^*$. Now taking the limits of the inner product relation $\langle Sf, Sg \rangle - q\langle S^*f, S^*g \rangle = \langle f, g \rangle$ for $f,g \in \mathcal{D}$ gives $(\mathcal{O({\rm q},S,\rm{w})})$ on $\mathcal{D}$. Hence $\mathcal{D}(\overline{S}) = \mathcal{D}(S^*)$ follows from the closure of the core intersections. Hence, $\mathcal{D}(\overline{S}) \cap \mathcal{D}(S^*)$ is dense in $\mathcal{H}$. If the right linear kernel satisfied $\mathcal{N}(S^*) = \{0\}$, the operator $S^*$ would be injective. Combined with irreducibility and the $q$-commutation norm identity, contradicts the assumption of subnormality. Thus, $\mathcal{N}(S^*) \neq \{0\}$. By the inductive application of $S$, $S^n(\mathcal{N}(S^*)) \subset \mathcal{D}(S) \cap \mathcal{D}(S^*)$ for all $n = 0, 1, 2, \dots$
		
		Now we choose a non-zero vector $e_0 \in \mathcal{N}(S^*)$ such that $\|e_0\| = 1$. Applying the weak norm identity $\|Sf\|^2 = q\|S^*f\|^2 + \|f\|^2$, we have $S^*e_0 = 0$ and 
		\[
		\|S e_0\|^2 = q\|S^*e_0\|^2 + \|e_0\|^2 = 1 = [1]_q.
		\]
		Now, defining $e_n = \left([n]_q!\right)^{-1/2} S^n e_0$ constructs a right orthonormal sequence satisfying $\overline{S}e_n = \sqrt{[n+1]_q}e_{n+1}$. By the topological irreducibility of $S$ in $\mathcal{H}$, the closed right linear span of $\{e_n\}_{n=0}^\infty$ cannot be a proper subspace. Therefore, it forms a complete right orthonormal basis for $\mathcal{H}$.
	\end{proof}
	\section {Unbounded quaternionic subnormal operators and quaternionic Quantum Harmonic Oscillator in Econometrics}
	Since the classical quantum harmonic oscillator considers only one attribute, we are not able to get the full idea of the market or economy. While we are considering quaternionic quantum harmonic oscillator (QHO), we are able to consider different attributes and we can compare their correlation. Since in classical QHO, eigenvalues are single values, we are able to identify the amount of transaction or frequency at that state. But  in  quaternionic QHO, instead of a single value, we get a sphere symmetric to real axis from which we can analyse what value of energy is obtained in each direction and can identify which attributes give a more stable economy. 
	
	In \cite{sqho}, authors studied quaternionic subnormal operators and proved that unbounded quaternionic subnormal operators belongs to the solution space of canonical commutation relation. Hence, we identify creation operators with unbounded subnormal operators. By using the model theory for unbounded quaternionic subnormal operators, we can  analyse the behavior of annihilation and creation operators, which yields the analysis of Hamiltonian operators. Also we give some spectral analysis of the quaternionic QHO using the S-spectra of quaternionic subnormal operators.
	
	It is well known that if we let $q=1$, the quaternionic  quantum $q$-oscillator problem reduces to the quaternionic commutation relation. In \cite{qho}, authors proved that the unbounded quaternionic subnormal operators belongs to the solution space of quaternionic quantum commutation relation. We use the result to study the quaternionic Hamiltonian equation.
We  present  the Hamiltonian equation as,	\begin{equation}\label{hamil}
	\hat{H}x= \dfrac{\hbar\omega}{2} \left(\hat{a}^\dagger\hat{a}-\dfrac{1}{2}\right)x
	\end{equation}
where  $\hat{a}^\dagger$ and $\hat{a} $ are annihilation and creation operators respectively. Since the unbounded quaternionic subnormal operator $S$  satisfy the canonical commutation relation as like annihilation and creation operators, we have,
	\begin{equation}\label{hamil}
		\hat{H}x= \dfrac{\hbar\omega}{2} \left(S^*S-\dfrac{1}{2}I\right)x
	\end{equation}
	for every $x \in \mathcal{D}$ where $\mathcal{D}$ is invariant under both $S$ and $S^*$. Using model theorem for unbounded subnormal operators Theorem  \ref{M_z}, we can rewrite Equation(\ref{hamil}) as, 
	
	\begin{equation}
		\hat{H}x= V^*\dfrac{\hbar\omega}{2} \left(M_{|\eta|^2}-\dfrac{1}{2}I\right)Vx
	\end{equation}
	
	for every $x \in \mathcal{D}$,
	where $M_{|\eta|^2}$ is the multiplication operator on quaternionic Hardy space. That is $\hat{H}x = V^*M_\nu Vx$ where $\nu= \dfrac{\hbar \omega}{2}(|\eta|^2-1/2)$. It is clear that $M_\nu$ is a multiplication operator on quaternionic Hardy space. So every Hamiltonian operator is unitarily equiovalent to a multiplication operator on quaternionic Hardy space. Since $\sigma_S(\hat{H})=\sigma_S(M_{|\eta|^2})$, studying the  $S$-spectra of Hamiltonian operators can be easily done from the  $S$-spectra of multiplication operators on  quaternionic Hardy space.
	 
	\bibliographystyle{abbrv}

\begin{thebibliography}{10}
			\bibitem{adler}
			S. L. Adler,  Quaternionic Quantum Mechanics and Quantum Fields. Oxford University Press. (1995).
			
			\bibitem{alpay&colombo}
			D. Alpay, F. Colombo and  I. Sabadini,  Quaternionic Linear Operators on a Hilbert Space. In: Quaternionic Hilbert Spaces and Slice Hyperholomorphic Functions. Operator Theory: Advances and Applications, vol 304. Birkhäuser, (2024).
			
			\bibitem{von N}
			G. Birkhoff,  J. von Neumann,  The Logic of Quantum Mechanics. Annals of Mathematics, 37(4), 823–843 (1936). 
			
				\bibitem{bram1955}
			J. B. Bram, {Subnormal operators}, 
			Duke Math. J. \textbf{22}, no. 1, 75--94 (1955).
			
			\bibitem{colombo2016}
			F.~Colombo, J.~Gantner, and I.~Sabadini,
			\emph{Spectral Theory on the $S$-Spectrum for Quaternionic Operators},
			Operator Theory: Advances and Applications, Birkh\"{a}user, 2016.
			
			\bibitem{colombo2009}
			F.~Colombo and I.~Sabadini,
			``The $S$-spectrum and the functional calculus for quaternionic operators,''
			\emph{Journal of Functional Analysis}, Vol. 256, No. 11, pp. 3770--3788, (2009).
			
			\bibitem{sabadini}
			F. Colombo, I. Sabadini, and  D. C. Struppa,   Noncommutative Functional Calculus: Theory and Applications of S-Spectrum. Springer Science \& Business Media (2011).
			
			\bibitem{subnormal}
			J. B. Conway, The theory of subnormal operators. No. 36. American Mathematical Soc., (1991).	
			
			\bibitem{curto}
			R.E. Curto, T. Prasad. Classes of operators related to subnormal operators,  Revista de la Real Academia de Ciencias Exactas, Físicas y Naturales. Serie A. Matemáticas 120.2 (2026).
			
			\bibitem{giardino2026}
			S. Giardino,
			The quantum harmonic oscillator and the real Hilbert space,
			\emph{Annals of Physics}, Vol. 465, (2026).
			
			\bibitem{giardino2021}
			S. Giardino,
			Quaternionic quantum harmonic oscillator,
			\emph{European Physical Journal Plus}, Vol. 136, No. 1, (2021).
			
			\bibitem{halmos1950}
			P. R. Halmos, {Normal dilations and extensions of operators}, Summa Brasil. Math. \textbf{2}, 125--134, (1950).
			
			\bibitem{sqho}
			K. Krishnan, T. Prasad and E. Shine Lal, Subnormal Operators  and Quantum Harmonic Oscillator  on Quaternionic Hilbert Spaces, \textit{Communicated}
		
		\bibitem{orrell2022}
		D.~Orrell,
		{Quantum Economics and Finance: An Applied Mathematics Introduction},
		3rd Edition, Panda Ohana Publishing, (2022).
		
		\bibitem{orrell}
	D. Orrell, The Quantum Stock Market: And the Road Not Taken in Finance. MIT Press, (2026).	
			
			\bibitem{ST for normal}
			G. Ramesh, P. Santhosh Kumar, Spectral theorem for quaternionic normal operators: Multiplication form,
			Bulletin des Sciences Mathématiques, Volume 159,
			(2020).
			
			\bibitem{Szafrani}
			J. Stochel, F. H. Szafraniec, Circular invariance of the Weyl form of the canonical commutation relation. World Scientific (2000).
			
			\bibitem{lambert1}
			J. Stochel, F. H. Szafraniec, On normal extensions of unbounded operators, II, Acta Sci.	Math. (Szeged) 53, 153-177 (1989).
			
			\bibitem{q-osci}
			F. H. Szafraniec,  Operators of the $ q $-oscillator, Banach Center Publications,  Institute of Mathematics Polish Academy of Sciences, 78, 293-307, (2007).
			
			
			\bibitem{qho}
			F. H. Szafraniec, Subnormality in the Quantum Harmonic Oscillator,	Commun. Math. Phys. 210, 323– 334 (2000).
			
			
			\bibitem{Viswanath}
			K. Viswanath, {Spectral Theorem for Quaternionic Normal Operators}, Transactions of the American Mathematical Society, Vol. 162, 337--350, (1971).
	\end{thebibliography}

\end{document}